\title{An Exact Method For Simulating Rapidly Decreasing Tempered Stable Distributions}
\author{Michael Grabchak\footnote{Email address: mgrabcha@uncc.edu}\\
{\it University of North Carolina Charlotte}}
\begin{document}
\newtheorem{prop}{Proposition}
\newtheorem{thrm}{Theorem}
\newtheorem{defn}{Definition}
\newtheorem{cor}{Corollary}
\newtheorem{lemma}{Lemma}
\newtheorem{remark}{Remark}
\newtheorem{exam}{Example}

\newcommand{\rd}{\mathrm d}
\newcommand{\rE}{\mathrm E}
\newcommand{\tr}{\mathrm{tr}}
\newcommand{\RDTS}{\mathrm{RDTS}}
\newcommand{\TTS}{\mathrm{TTS}}
\newcommand{\iid}{\stackrel{\mathrm{iid}}{\sim}}
\newcommand{\eqd}{\stackrel{d}{=}}
\newcommand{\cond}{\stackrel{d}{\rightarrow}}
\newcommand{\conv}{\stackrel{v}{\rightarrow}}
\newcommand{\conw}{\stackrel{w}{\rightarrow}}
\newcommand{\conp}{\stackrel{p}{\rightarrow}}

\maketitle

\begin{abstract}
Rapidly decreasing tempered stable (RDTS) distributions are useful models for financial applications. However, there has been no exact method for simulation available in the literature. We remedy this by introducing an exact simulation method in the finite variation case. Our methodology works for the wider class of $p$-RDTS distributions.
\end{abstract}

\section{Introduction}

Over the past few years, rapidly decreasing tempered stable (RDTS) distributions  have proven to be useful models for financial applications. They were introduced in \cite{Kim:Rachev:Bianchi:Fabozzi:2010} and have since been used extensively, see e.g.\ the monograph \cite{Rachev:Kim:Bianchi:Fabozzi:2011},  the recent publications \cite{Fallahgoul:Kim:Fabozzi:Park:2019}, \cite{Fallahgoul:Loeper:2019}, \cite{Kim:Jiang:Stoyanov:2019}, and the references therein. These models capture many empirically observed properties of financial returns and, unlike many competitors such as stable and classical tempered stable distributions, all of their exponential moments are finite. This is useful as it allows one to find certain equivalent martingale measures and it is needed to define certain associated GARCH processes, see the discussion in \cite{Bianchi:Rachev:Kim:Fabozzi:2011}. 

Despite the usefulness of RDTS distributions, there are only two general methods for simulation available in the literature. The first is an approximate method based on truncating an infinite series representation, see \cite{Rosinski:Sinclair:2010}. The second is the inverse transform method, which requires the computationally intensive task of numerically inverting the characteristic function. In this paper, we introduce an exact simulation method for the finite variation case. Our approach combines rejection sampling with state-of-the-art methods for simulating from the so-called truncated tempered stable distributions, see \cite{Chi:2012} and \cite{Dassio:Lim:Qu:2020}. Further, our method works for the wider class of $p$-RDTS distributions.

A $p$-RDTS distribution is an infinitely divisible distribution with no Gaussian part and a L\'evy measure of the form
\begin{eqnarray}\label{eq: gen levy meas}
D x^{-1-\alpha} e^{-a^p x^p}1_{[x<0]}\rd x + C x^{-1-\alpha} e^{-b^p x^p}1_{[x>0]}\rd x
\end{eqnarray}
where $\alpha<2$, $p>1$, $C,D\ge0$, and $a,b>0$. When $p=2$, these correspond to the class of RDTS distributions and in the limiting case when $p=1$ we get the class of classical tempered stable distributions. Other closely related distributions are studied in \cite{Rosinski:2007}, \cite{Rosinski:Sinclair:2010}, \cite{Bianchi:Rachev:Kim:Fabozzi:2011}, \cite{Grabchak:2012} and \cite{Grabchak:2016}. The following result follow immediately from Theorems 21.9 and 25.17 in \cite{Sato:1999}.

\begin{prop}
1. If $X$ has a $p$-RDTS distribution (with $p>1$), then
\begin{eqnarray}\label{eq: mgf exists}
\rE\left[e^{\theta X}\right]<\infty \ \ \mbox{for every } \theta\in\mathbb R.
\end{eqnarray}
2. A $p$-RDTS distribution corresponds to a L\'evy process with finite variation if and only if $\alpha<1$.
\end{prop}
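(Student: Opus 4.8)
The plan is to verify, for the L\'evy measure $\nu$ appearing in \eqref{eq: gen levy meas}, the two integrability conditions to which the cited results of \cite{Sato:1999} reduce these statements. For Part 1, I would invoke Theorem 25.17, which asserts that for an infinitely divisible $X$ with L\'evy measure $\nu$ one has $\rE[e^{\theta X}]<\infty$ if and only if $\int_{|x|>1}e^{\theta x}\,\nu(\rd x)<\infty$. It therefore suffices to fix an arbitrary $\theta\in\mathbb R$ and to check that the two tail integrals, over $(1,\infty)$ and over $(-\infty,-1)$, are finite.

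The convergence of both tail integrals is driven by the single structural feature that $p>1$. On $(1,\infty)$ the integrand is a constant multiple of $e^{\theta x}x^{-1-\alpha}e^{-b^px^p}$, and since the tempering factor $e^{-b^px^p}$ decays super-exponentially it dominates the factor $e^{\theta x}$ for every $\theta$, so the integrand tends to $0$ faster than exponentially as $x\to\infty$ and is integrable; the tail over $(-\infty,-1)$ is handled identically, with $a$ in place of $b$, the tempering again overwhelming $e^{\theta x}$ as $x\to-\infty$ whatever the sign of $\theta$. As $\theta\in\mathbb R$ was arbitrary, \eqref{eq: mgf exists} follows. I expect this part to be routine, the super-exponential tempering being exactly what guarantees that all exponential moments exist.

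For Part 2, I would apply Theorem 21.9, which, for a distribution with no Gaussian part (as is built into the definition of a $p$-RDTS distribution), characterizes finite variation of the associated L\'evy process by the condition $\int_{|x|\le1}|x|\,\nu(\rd x)<\infty$. On $\{|x|\le1\}$ the tempering factors stay between a positive constant and $1$, hence are bounded away from $0$ and from $\infty$, so the integrand is comparable to $|x|\cdot|x|^{-1-\alpha}=|x|^{-\alpha}$. The condition thus reduces to the convergence of $\int_0^1x^{-\alpha}\,\rd x$, which holds precisely when $\alpha<1$, yielding the stated equivalence.

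The only point requiring genuine care, and the one I would treat first, is ensuring the hypotheses of Sato's theorems are in force before reading off the integrability computations: namely that \eqref{eq: gen levy meas} is a bona fide L\'evy measure, so that $\int\min(1,x^2)\,\nu(\rd x)<\infty$, and that the absence of a Gaussian component is indeed part of the definition of a $p$-RDTS distribution. Once this setup is confirmed, both conclusions follow from the elementary estimates on the integrands near $0$ and near $\pm\infty$ described above.
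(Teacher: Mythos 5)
Your proposal is correct and follows exactly the route the paper intends: the paper gives no written proof, stating only that the result follows immediately from Theorems 25.17 and 21.9 of \cite{Sato:1999}, and your verification of the two integrability conditions (super-exponential decay of $e^{-b^px^p}$ dominating $e^{\theta x}$ on the tails; boundedness of the tempering factor near the origin reducing finite variation to $\int_0^1 x^{-\alpha}\,\rd x<\infty$) is precisely the omitted routine check.
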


In the limiting case when $p=1$, exact simulation techniques are well-known for the case where $\alpha<1$, see e.g.\ \cite{Hofert:2011} and the references therein. When $p>1$, the only case where an exact simulation method is known is when $p=2$, $\alpha=1$, and the distribution is symmetric, i.e.\ $C=D$ and $a=b$, see Example 5.2 in \cite{Dassio:Lim:Qu:2020}. However, this method does not seem to be generalizable to other $p$-RDTS distributions. In the next two sections, we discuss our simulation methods. Throughout, we write $\mathrm{Pois}(C)$ to denote the Poisson distribution with mean $C$, $U(0,1)$ to denote the uniform distribution on $(0,1)$, and $\mathrm{beta}(\beta,\eta)$ to denote the beta distribution with shape parameters $\beta>0$ and $\eta>0$.

\section{Simulation when $\alpha\in[0,1)$}

Fix $\alpha\in[0,1)$ and $p>1$. When $D=0$, the characteristic function of a $p$-RDTS distribution $\mu$ is given, for $z\in\mathbb R$, by 
\begin{eqnarray}\label{eq: char func}
\hat\mu(z) &=& \exp\left\{C\int_0^\infty \left(e^{ixz}-1\right)x^{-1-\alpha}e^{-b^p x^p}\rd x\right\}\\
&=& \exp\left\{cb^\alpha\int_0^\infty \left(e^{ixz/b}-1\right)x^{-1-\alpha}e^{-x^p}\rd x\right\}.\nonumber
\end{eqnarray}
We denote this distribution by $\RDTS_\alpha^p(b,C)$. It follows that, if $X_1\sim \RDTS_\alpha^p(1,C)$ and $X_2\sim \RDTS_\alpha^p(1,D)$ are independent, then the distribution of $X_1/b-X_2/a$ has the L\'evy measure given in \eqref{eq: gen levy meas}. Thus, without loss of generality, we focus on simulation from $\RDTS_\alpha^p(1,C)$.

Our simulation method requires simulating from a truncated tempered stable distribution. This is an infinitely divisible distribution whose characteristic function is given by
\begin{eqnarray*}
\exp\left\{C\int_0^1 \left(e^{ixz}-1\right)x^{-1-\alpha}e^{- x}\rd x\right\}
\end{eqnarray*}
for some $C>0$ and $\alpha\in[0,1)$. We denote this distribution by $\TTS_\alpha(C)$. For $\alpha\ne0$, two exact simulation techniques are available in the literature. The first is given in Algorithm 4.4 of \cite{Dassio:Lim:Qu:2020} and the second requires combining Algorithm 5.1 in \cite{Chi:2012} with the algorithm in Section 2.2 of that paper. When $\alpha=0$ the $\TTS_0(C)$ distribution corresponds to the truncated gamma distribution, which is closely related to the Dickman distribution and Vervaat perpetuities. Exact simulation approaches can be found in Algorithm 3.2 of \cite{Dassio:Qu:Lim:2019} or Steps 1-4 in Algorithm 6.1 of \cite{Chi:2012}, see also the references in these papers for additional simulation techniques. Our methodology is based on the following result. 

\begin{thrm}\label{thrm:main}
Fix $\alpha\in[0,1)$, $p>1$, and $C>0$. If $X\sim\RDTS_\alpha^p(1,C)$ then 
\begin{eqnarray}\label{eq: main}
X \eqd X_0 + \sum_{j=1}^{N_1} X_j + \sum_{j=1}^{N_2} Y_j,
\end{eqnarray}
where $N_1,N_2,X_0,X_1,X_2,\dots$, and $Y_1,Y_2,\dots$ are independent random variables with\\
1. $N_1\sim\mathrm{Pois}(CK_1)$ and $N_2\sim\mathrm{Pois}(CK_2)$, where
$$
K_1 = \int_1^\infty e^{-x^p}x^{-1-\alpha}\rd x= \frac{1}{p} \int_1^\infty e^{-x}x^{-1-\alpha/p}\rd x
$$
and
$$
K_2 = \int_0^1  \left(e^{- x^p}-e^{-x}\right)x^{-\alpha-1}\rd x;
$$
2. $X_0\sim \TTS_{\alpha}(C)$;\\
3. $X_1,X_2,\dots$ all have pdf
$$
f_1(x)=\frac{1}{K_1} e^{-x^p}x^{-1-\alpha}, \ \ x>1;
$$
4. $Y_1,Y_2,\dots$ all have pdf
$$
f_2(x)=\frac{1}{K_2} \left(e^{- x^p}-e^{-x}\right)x^{-\alpha-1}, \ \ 0<x<1.
$$
\end{thrm}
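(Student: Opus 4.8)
The plan is to prove the distributional identity by matching characteristic functions. Since $\alpha\in[0,1)$ the process has finite variation (Part 2 of the Proposition), so the characteristic function of $X\sim\RDTS_\alpha^p(1,C)$ admits the driftless representation
\begin{eqnarray*}
\hat\mu(z)=\exp\left\{C\int_0^\infty\left(e^{ixz}-1\right)x^{-1-\alpha}e^{-x^p}\rd x\right\},
\end{eqnarray*}
with no compensating term, which is exactly what makes the bookkeeping work. My strategy is to identify each summand on the right-hand side of \eqref{eq: main} with an infinitely divisible distribution, compute its characteristic function, and then use independence to multiply them. The L\'evy measure of $X$ is $\nu(\rd x)=Cx^{-1-\alpha}e^{-x^p}1_{[x>0]}\rd x$, and I will show that the three summands decompose $\nu$ additively.

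Before the main computation I would verify that $f_1$ and $f_2$ are genuine probability densities, i.e.\ that $K_1,K_2\in(0,\infty)$. For $K_1$, the factor $e^{-x^p}$ gives super-exponential decay at infinity and the integrand is bounded near $x=1$, so $K_1$ is finite and positive; the alternative expression follows from the substitution $u=x^p$, which is routine. For $K_2$ the key points are that $p>1$ forces $x^p<x$ on $(0,1)$, hence $e^{-x^p}-e^{-x}>0$ there so $f_2\ge0$, and that near the origin $e^{-x^p}-e^{-x}\sim x$, making the integrand of order $x^{-\alpha}$, which is integrable precisely because $\alpha<1$. This confirms $K_2\in(0,\infty)$.

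For the core computation, recall that a compound Poisson sum $\sum_{j=1}^{N}Z_j$ with $N\sim\mathrm{Pois}(\lambda)$ and $Z_j$ having density $g$ has characteristic function $\exp\{\lambda\int(e^{izx}-1)g(x)\rd x\}$. Applying this with $(\lambda,g)=(CK_1,f_1)$ and $(\lambda,g)=(CK_2,f_2)$ cancels the normalizing constants and shows that $\sum_{j=1}^{N_1}X_j$ and $\sum_{j=1}^{N_2}Y_j$ have L\'evy measures $Ce^{-x^p}x^{-1-\alpha}1_{[x>1]}\rd x$ and $C(e^{-x^p}-e^{-x})x^{-1-\alpha}1_{[0<x<1]}\rd x$, respectively, while $X_0\sim\TTS_\alpha(C)$ contributes $Cx^{-1-\alpha}e^{-x}1_{[0<x<1]}\rd x$. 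Adding the last two on $(0,1)$ cancels the $e^{-x}$ contributions to give $Cx^{-1-\alpha}e^{-x^p}1_{[0<x<1]}\rd x$, and adjoining the first on $(1,\infty)$ reproduces $\nu$ exactly. By independence the characteristic function of the right-hand side of \eqref{eq: main} is the product of the three factors, which equals $\hat\mu(z)$, and uniqueness of characteristic functions finishes the proof.

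The computation is essentially bookkeeping once the setup is correct, so I do not expect a serious obstacle. The one place demanding genuine care is the verification that $f_2$ is a valid density: both its nonnegativity (which is where $p>1$ enters) and its integrability at the origin (which is where $\alpha<1$ enters) must be checked, since these are exactly the hypotheses that make the whole decomposition legitimate.
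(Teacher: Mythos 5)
Your proof is correct and follows essentially the same route as the paper: the paper splits $e^{-x^p}1_{[x>0]}$ into the three pieces $e^{-x}1_{[0<x\le1]}+e^{-x^p}1_{[x>1]}+(e^{-x^p}-e^{-x})1_{[0<x\le1]}$, factors the characteristic function accordingly, and identifies the last two factors as compound Poisson via ``a simple conditioning argument,'' which is exactly the compound Poisson computation you spell out. Your additional verification that $K_1,K_2\in(0,\infty)$ and that $f_2\ge0$ is a sensible (if routine) supplement that the paper leaves implicit.
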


\begin{proof}
Note that
$$
e^{-x^p} 1_{[x>0]}=  e^{-x} 1_{[0<x\le1]}+e^{-x^p} 1_{[x>1]}+ \left(e^{- x^p}-e^{-x}\right) 1_{[0<x\le1]}.
$$
It follows that the characteristic function of $\RDTS_\alpha^p(1,C)$ can be written as
\begin{eqnarray*}
&&\exp\left\{C\int_0^1 \left(e^{ixz}-1\right)x^{-1-\alpha}e^{- x}\rd x\right\}\\
&&\qquad\times\exp\left\{C\int_1^\infty \left(e^{ixz}-1\right)x^{-1-\alpha}e^{- x^p}\rd x\right\}\\
&&\qquad\times\exp\left\{C\int_0^1 \left(e^{ixz}-1\right)x^{-1-\alpha}\left(e^{-x^p}-e^{-x}\right)\rd x\right\}.
\end{eqnarray*}
In this product, the first term is the characteristic function of $\TTS_{\alpha}(C)$ and, by a simple conditioning argument, it is readily seen that the other two terms are the characteristic functions of the sums in \eqref{eq: main}.
\end{proof}

Applying change of variables and integration by parts shows that we can write 
$$
K_1 = \alpha^{-1}\left(\Gamma(1-\alpha/p,1)+e^{-1}\right)
$$
and
$$
K_2 = \alpha^{-1}\left(\gamma(1-\alpha,1)-\gamma(1-\alpha/p,1)\right),
$$
where $\gamma(\beta,t)=\int_0^t e^{-x} x^{\beta-1}\rd x$ and $\Gamma(\beta,t)=\int_t^\infty e^{-x} x^{\beta-1}\rd x$ are, respectively, the lower and the upper incomplete gamma functions.

To use Theorem \ref{thrm:main} for simulation, we just need a way to simulate from the pdfs $f_1$ and $f_2$. Our approach is based on rejection sampling. First, note that 
$$
f_1(x)=\frac{1}{K_1} e^{-x^p}x^{-1-\alpha}1_{[x>1]}\le \frac{1}{epK_1} pe^{1-x^p}x^{p-1}1_{[x>1]}= V_1g_1(x),
$$
where $V_1 = 1/(epK_1)$ and $g_1(x)=pe^{1-x^p}x^{p-1}1_{[x>1]}$. It is easy to check that, if $U\sim U(0,1)$ and $Y=(1-\log U)^{1/p}$, then $Y$ has pdf $g_1$. This leads to the following algorithm for simulating from $f_1$.\\

\noindent \textbf{Algorithm 1.}\\
\textbf{Step 1.} Independently simulate $U_1,U_2\sim U(0,1)$ and let $Y=(1-\log(U_2))^{1/p}$.\\
\textbf{Step 2.} If $U_1\le Y^{-p-\alpha}$ return $Y$, otherwise go back to step 1.\\

From standard facts about rejection sampling, on a given iteration the probability of accepting is
$$
1/V_1 = epK_1 \ge e \int_1^\infty e^{-x}x^{-2}\rd x \approx 0.40365.
$$
%$$
%epK_1 \le e \int_1^\infty e^{-x}x^{-1}\rd x \approx 0.59647
%$$
Thus, this probability is uniformly bounded away from zero in all parameters.

Next, we turn to simulation from $f_2$. We begin by recalling the fact that for $x\in[0,1]$ and $p>1$
\begin{eqnarray}\label{eq: exp bound}
0\le e^{-x}(x-x^p) \le e^{-x^p}-e^{-x}\le x-x^p,
\end{eqnarray}
see e.g.\ Lemma 7.2 of \cite{Grabchak:2020}. It follows that
\begin{eqnarray*}
f_2(x) \le \frac{1}{K_2} x^{-\alpha}(1-x^{p-1})1_{[0<x\le1]} = V_2 g_2(x),
\end{eqnarray*}
where $V_2 = \frac{p-1}{(p-\alpha)(1-\alpha)K_2}$ and 
$$
g_2(x) = \frac{(p-\alpha)(1-\alpha)}{p-1}x^{-\alpha}(1-x^{p-1}), \ \ 0<x\le1.
$$
It is easy to check that, if $Y\sim \mathrm{beta}\left(\frac{1-\alpha}{p-1},2\right)$, then $Z=Y^{1/(p-1)}$ has pdf $g_2$. Letting
$$
\varphi(z) = \frac{e^{-z^p}-e^{-z}}{z-z^p}
$$
leads to the following algorithm for simulating from $f_2$.\\

\noindent \textbf{Algorithm 2.}\\
\textbf{Step 1.} Independently simulate $U\sim U(0,1)$, $Y\sim \mathrm{beta}\left(\frac{1-\alpha}{p-1},2\right)$, and let $Z=Y^{1/(p-1)}$.\\
\textbf{Step 2.} If $U\le \varphi(Z)$ return $Z$, otherwise go back to step 1.\\

In this case the probability of accepting on a given iteration is
$$
1/V_2 = K_2 \frac{(p-\alpha)(1-\alpha)}{p-1}\ge e^{-1}\approx 0.36788
$$
where we use \eqref{eq: exp bound} to get
$$
K_2 \ge \int_0^1  e^{- x}x^{-\alpha}(1-x^{p-1})\rd x \ge e^{-1} \int_0^1 x^{-\alpha}(1-x^{p-1})\rd x=e^{-1} \frac{p-1}{(p-\alpha)(1-\alpha)}.
$$

\section{Simulation when $\alpha<0$}

Simulation when $\alpha<0$ is significantly simpler. However, this case still leads to interesting models, which have found application in finance \cite{Carr:Geman:Madan:Yor:2002} and other areas \cite{Aalen:1992}. In this case, we allow for any $p>0$ and not just $p>1$. However, in the case when $p\in(0,1]$, the result in \eqref{eq: mgf exists} will not hold.

Fix $\alpha<0$, $p>0$, $b>0$, $C>0$, and, as before, without loss of generality assume that $D=0$. In this case the characteristic function is again given by \eqref{eq: char func} and we again denote the corresponding distribution by $\RDTS^p_\alpha(b,C)$. Let
$$
K_3 = \int_0^\infty e^{-bx^p}x^{-1-\alpha}\rd x = b^\alpha p^{-1}\Gamma(1-\alpha/p)<\infty.
$$
Applying a simple conditioning argument shows that if $X\sim \RDTS^p_\alpha(b,C)$, then 
$$
X \eqd \sum_{j=1}^N X_j
$$
where $N,X_1,X_2,\dots$ are independent random variables with $N\sim\mathrm{Pois}(CK_3)$ and the $X_i$'s all having pdf
$$
f_3(x) = \frac{1}{K_3} e^{-bx^p}x^{|\alpha|-1}, \ \ x>0.
$$
This is a generalized gamma distribution, which was introduced in \cite{Stacy:1962}, and we denote it by $\mathrm{GGa}(|\alpha|,p,b)$. When $p=1$ it reduces to the standard gamma distribution, which we denote by $\mathrm{Gamma}(b,|\alpha|)$. It is easily checked that
\begin{eqnarray*}
\mbox{if }X\sim \mathrm{Gamma}(|\alpha|/p,b)\mbox{, then }X^{1/p}\sim \mathrm{GGa}(|\alpha|,p,b),
\end{eqnarray*}
which gives a simple method for simulation.

\section{Discussion}

The difficulty in working with $p$-RDTS distributions is that they do not have a closed form for their pdfs or cdfs. In fact, aside for the notable case when $p=2$, they do not even have a closed form for their characteristic functions. Some of this difficulty can be alleviated by having the kind of tractable simulation method introduced here. For instance, one can perform parameter estimation by using a simulation-based approach such as the simulated quantile method of \cite{Dominicy:Veredas:2013}. We will explore parameter estimation for $p$-RDTS distributions using such approaches in a future work. In a different direction, it may be possible to develop simulation methods similar to the one described in this paper for other, related, distributions such as certain classes of the generalized tempered stable distributions of \cite{Rosinski:Sinclair:2010} or the $p$-tempered $\alpha$-stable distributions of \cite{Grabchak:2012} and \cite{Grabchak:2016}.


\begin{thebibliography}{10}

\bibitem{Aalen:1992}
O.O. Aalen (1992).
\newblock Modelling heterogeneity in survival analysis by the compound Poisson distribution.
\newblock {\em The Annals of Applied Probability}, 2(4): 951-972.

\bibitem{Bianchi:Rachev:Kim:Fabozzi:2011}
M.L.~Bianchi, S.T.~Rachev, Y.S.~Kim, and F.J.~Fabozzi (2011).
\newblock Tempered infinitely divisible distributions and processes.
\newblock {\em Theory of Probability and Its Applications}, 55(1):2--26.

\bibitem{Carr:Geman:Madan:Yor:2002}
P.~Carr, H.~Geman, D.B.~Madan, and M.~Yor (2002).
\newblock The fine structure of asset returns: An empirical investigation.
\newblock {\em Journal of Business}, 75(2): 305-332.

\bibitem{Chi:2012}
Z.~Chi (2012).
\newblock On exact sampling on nonnegative infinitely divisible random variables.
\newblock {\em Advances in Applied Probability}, 44(3):842--873.

\bibitem{Dassio:Qu:Lim:2019}
A.~Dassio, Y.~Qu, and J.W.~Lim (2019).
\newblock Exact simulation of generalised Vervaat perpetuities.
\newblock {\em Journal of Applied Probability}, 56(1):57--75.

\bibitem{Dassio:Lim:Qu:2020}
A.~Dassio, J.W.~Lim, and Y.~Qu (2020).
\newblock Exact simulation of a truncated L\'evy subordinator.
\newblock {\em ACM Transactions on Modeling and Computer Simulation}, 30(10): Article 17.

\bibitem{Dominicy:Veredas:2013}
Y.~Dominicy and D.~Veredas (2013).
\newblock The method of simulated quantiles.
\newblock {\em Journal of Econometrics},  172(2):235--247.

\bibitem{Fallahgoul:Kim:Fabozzi:Park:2019}
H.~Fallahgoul, Y.S.~Kim, F.~J.~Fabozzi, and J.~Park  (2019).
\newblock Quanto option pricing with L\'evy models.
\newblock {\em Computational Economics}, 53(3):1279--1308.	
	
\bibitem{Fallahgoul:Loeper:2019}
H.~Fallahgoul and G.~Loeper (2019).
\newblock Modelling tail risk with tempered stable distributions: an overview.
\newblock {\em Annals of Operations Research}, https://doi.org/10.1007/s10479-019-03204-3.	

\bibitem{Grabchak:2012}
M.~Grabchak (2012).
\newblock On a new class of tempered stable distributions: Moments and regular variation.
\newblock {\em Journal of Applied Probability}, 49(4):1015--1035.
	
\bibitem{Grabchak:2016}
M.~Grabchak (2016).
\newblock{\em Tempered Stable Distributions: Stochastic Models for Multiscale Processes.} 
\newblock Springer, Cham.

\bibitem{Grabchak:2020}
M.~Grabchak (2020).
\newblock On the simulation for general tempered stable Ornstein-Uhlenbeck processes. {\em Journal of Statistical Computation and Simulation}, 90(6):1057--1081.

\bibitem{Hofert:2011}
M.~Hofert (2011).
\newblock Sampling exponentially tilted stable distributions.
\newblock {\em ACM Transactions on Modeling and Computer Simulation}, 22(1): Article 3.

\bibitem{Kim:Jiang:Stoyanov:2019}
Y.S.~Kim, D.~Jiang, and S.~Stoyanov (2019).
\newblock Long and short Memory in the risk-neutral pricing process.
\newblock {\em The Journal of Derivatives}, 26(4):71--88.

\bibitem{Kim:Rachev:Bianchi:Fabozzi:2010}
Y.S.~Kim, S.T.~Rachev, M.L.~Bianchi, and F.J.~Fabozzi (2010).
\newblock Tempered stable and tempered infinitely divisible GARCH models.
\newblock {\em Journal of Banking \& Finance}, 34(9):2096--2109.

\bibitem{Rachev:Kim:Bianchi:Fabozzi:2011}
S.T. Rachev, Y.S. Kim, M.L. Bianchi, and F.J. Fabozzi (2011).
\newblock {\em Financial Models with Levy Processes and Volatility Clustering}.
\newblock John Wiley \& Sons Ltd.

\bibitem{Rosinski:2007}
J.~Rosi{\'n}ski (2007).
\newblock Tempering stable processes.
\newblock {\em Stochastic Processes and their Applications}, 117(6):677--707.

\bibitem{Rosinski:Sinclair:2010}
J.~Rosi{\'n}ski and J.L. Sinclair (2010).
\newblock Generalized tempered stable processes.
\newblock {\em Banach Center Publications}, 90:153--170.

\bibitem{Sato:1999}
K.~Sato (1999).
\newblock {\em {L}\'evy Processes and Infinitely Divisible Distributions}.
\newblock Cambridge University Press, Cambridge.

\bibitem{Stacy:1962}
E.W.~Stacy (1962)
A generalization of the gamma distribution.
{\em The Annals of Mathematical Statistics}, 33(3):1187--1192.	

\end{thebibliography}
\end{document}